\title{Necessity of weak subordination for some strongly subordinated L\'evy processes}
\author{Boris Buchmann\thanks{Research School of Finance, Actuarial Studies \& Statistics,
        Australian National University,
        ACT 0200,
        Australia.
        Email: \href{mailto:boris.buchmann@anu.edu.au}{boris.buchmann@anu.edu.au}
    }
    \and
    Kevin W.~Lu\thanks{Research School of Finance, Actuarial Studies \& Statistics,
        Australian National University,
        ACT 0200,
        Australia.
        Email: \href{mailto:u5119413@anu.edu.au}{u5119413@anu.edu.au}}
}
\numberwithin{equation}{section} 
\let\originalleft\left
\let\originalright\right
\def\left#1{\mathopen{}\originalleft#1}
\def\right#1{\originalright#1\mathclose{}}
\newtheorem{theorem}{Theorem}[section]
\newtheorem{lemma}[theorem]{Lemma}
\newtheorem{proposition}[theorem]{Proposition}
\theoremstyle{definition}
\newtheorem{definition}[theorem]{Definition}
\theoremstyle{remark}
\newtheorem{remark}[theorem]{Remark}
\renewcommand{\today}
{\number\day \space \ifcase\month\or
    January\or February\or March\or April\or May\or June\or
    July\or August\or September\or October\or November\or December
    \fi \space \number\year
}
\newcommand{\rmd}{{\rm d}}
\newcommand{\rmi}{{\rm i}}
\newcommand{\eqd}{\stackrel{D}{=}}
\newcommand{\CC}{\mathbb{C}}
\newcommand{\DD}{\mathbb{D}}
\newcommand{\EE}{\mathbb{E}}
\newcommand{\RR}{\mathbb{R}}
\newcommand{\NN}{\mathbb{N}}
\newcommand{\ZZ}{\mathbb{Z}}
\newcommand{\PP}{\mathbb{P}}
\newcommand{\BBB}{{\cal B}}
\newcommand{\EEE}{{\cal E}}
\newcommand{\RRR}{{\cal R}}
\newcommand{\TTT}{{\cal T}}
\newcommand{\XXX}{{\cal X}}
\newcommand{\ZZZ}{{\cal Z}}
\newcommand{\skal}[2]{\langle #1,#2\rangle}
\newcommand{\eins}{{\bf 1}}
\newcommand{\bfnull}{{\bf 0}}
\newcommand{\bfe}{{\bf e}}
\newcommand{\bfm}{{\bf m}}
\newcommand{\bfr}{{\bf r}}
\newcommand{\bft}{{\bf t}}
\newcommand{\bfx}{{\bf x}}
\newcommand{\bfy}{{\bf y}}
\newcommand{\bfw}{{\bf w}}
\newcommand{\bfz}{{\bf z}}
\newcommand{\bfd}{{\bf d}}
\newcommand{\bfR}{{\bf R}}
\newcommand{\bfS}{{\bf S}}
\newcommand{\bfT}{{\bf T}}
\newcommand{\bfX}{{\bf X}}
\newcommand{\bfY}{{\bf Y}}
\newcommand{\bfZ}{{\bf Z}}
\newcommand{\bfdelta}{\boldsymbol{\delta}}
\newcommand{\bflambda}{\boldsymbol{\lambda}}
\newcommand{\bfmu}{\boldsymbol{\mu}}
\newcommand{\bfpi}{\boldsymbol{\pi}}
\newcommand{\bftheta}{\boldsymbol{\theta}}
\newcommand{\tr}{\diamond}
\newcommand{\given}{{\,\vert\,}}
\newcommand{\givenm}{\,\middle\vert\,}
\begin{document}
\date{\today}
\maketitle

\begin{abstract}
    Consider the strong subordination of a multivariate L\'evy process with a multivariate subordinator. If the subordinate is a stack of independent L\'evy processes and the components of the subordinator are indistinguishable within each stack, then strong subordination produces a L\'evy process, otherwise it may not. Weak subordination was introduced to extend strong subordination, always producing a L\'evy process even when strong subordination does not.  Here, we prove that strong and weak subordination are equal in law under the aforementioned condition. In addition, we prove that if strong subordination is a L\'evy process, then it is necessarily equal in law to weak subordination in two cases: firstly, when the subordinator is deterministic and secondly, when it is pure-jump with finite activity.
    
    \vspace{0.5em} \noindent {\em Keywords:} L\'evy process, subordinator, multivariate subordination, weak subordination, Poisson point process, Poisson random measure.
       
    \vspace{0.5em} \noindent {\em 2010 MSC Subject Classification:} Primary: 60G51, 60G55.
\end{abstract}

\section{Introduction}

Let $\bfX=(X_1,\dots, X_n)$ be an $n$-dimensional L\'evy process and $\bfT=(T_1,\dots,T_n)$ be an $n$-dimensional subordinator independent of $\bfX$. The operation that evaluates the process $\bfX$ at times given by the subordinator $\bfT$ is defined by
\begin{align*}
    \bfX\circ\bfT = (X_1(T_1(t)),\dots,X_n(T_n(t)))_{t\geq 0}
\end{align*}
and known as \emph{strong subordination}. This creates a ``time-changed" process. The study of the multivariate subordination of L\'evy processes originated with the work of \cite{BPS01}. It is well-known that strong subordination produces a L\'evy process in the following cases:
\begin{enumerate}[(C1)]
    \item $\bfT$ has indistinguishable components. \label{case1}
    \item $\bfX$ has independent components. \label{case2}
    \item $\bfT$ and $\bfX$ satisfy the \emph{stacked univariate subordination} condition: for some $1\leq d\leq n$ and $n_1+\dots+n_d=n$,
    \begin{align}\label{stacksubord}
        \bfX=(\bfY_1,\dots,\bfY_d),\quad \bfT = (R_1\bfe_1,\dots ,R_d\bfe_d),
    \end{align}
    where $\bfY_1,\dots,\bfY_d$ are independent L\'evy processes, $\bfY_m$ is $n_m$-dimensional, $(R_1,\allowbreak\dots, R_d)$ is a $d$-dimensional subordinator and $\bfe_m=(1,\dots,1)\in\RR^{n_m}$, $1\leq m \leq d$.  \label{case3}
\end{enumerate}
For the proof of sufficiency under conditions \ref{case1} and \ref{case3}, see \cite[Theorem 30.1]{sato99} and \cite[Theorem 3.3]{BPS01}, respectively, though the  origin of the former case goes back to \cite{Zo58}. Both conditions \ref{case1} and \ref{case2} are implied by condition \ref{case3}. It is condition \ref{case2}, as opposed to condition \ref{case3}, that more commonly appears in financial applications \cite{BKMS16,LS10,Se08} since it has a more intuitive interpretation. Outside of these sufficient conditions, strong subordination does not necessarily produce a L\'evy process \cite[Proposition 3.9]{BLM17a}.

These restrictions on $\bfX$ and $\bfT$ for $\bfX\circ\bfT$ to remain in the well-understood L\'evy process framework are  problematic in applications because they severely limit the dependence structure of $\bfX\circ\bfT$. To address this shortcoming, \cite{BLM17a} introduced a new operation for constructing general time-changed multivariate L\'evy processes $\bfX\odot\bfT$, known as \emph{weak subordination}, without any restriction on the subordinate $\bfX$ or the subordinator~$\bfT$.

Weak subordination is based on the idea, roughly speaking, of constructing the L\'evy process that has the distribution of $\bfX(\bft):=(X_1(t_1),\dots,X_n(t_n))$ conditional on $\bfT(t) = \bft:=(t_1,\dots,t_n)$, $t\geq0$. To be more detailed, the idea is to decompose the subordinator $\bfT(t) = \bfd t + \bfS(t)$, $t\geq0$,  into deterministic and pure-jump parts. For the deterministic subordinator part, $\bfX(\bfd)$ is infinitely divisible and hence associated to a L\'evy process, while for the pure-jump subordinator part, a marked Poisson point process can be constructed such that it jumps with the distribution of $\bfX(\bft)$ when the subordinator jumps by $\Delta \bfT(t) = \bft$ and then associated to a L\'evy process by the L\'evy-It\^{o} decomposition, and finally the two L\'evy processes are combined by convolution. This allows for more flexible dependence modelling while remaining in the class of L\'evy processes, a closure property not enjoyed by strong subordination. Weak subordination coincides with strong subordination under conditions \ref{case1} or \ref{case2} in the sense that $(\bfT,\allowbreak\bfX\circ \bfT)\eqd (\bfT,\bfX\odot \bfT)$ \cite[Proposition 3.3]{BLM17a}, and it also reproduces many analogous properties \cite[Propositions 3.3, 3.7]{BLM17a}.

In this paper, we show that the more general case \ref{case3} of stacked univariate subordination considered in \cite{BPS01} also satisfies $(\bfT,\bfX\circ \bfT)\eqd (\bfT,\bfX\odot \bfT)$, which unifies it under weak subordination (see Theorem \ref{propsupextendssub}). This raises the question of whether there are alternative definitions of weak subordination that are also consistent with strong subordination in this way. We partially address this by showing that if $(\bfT, \bfX\circ \bfT)$ is a L\'evy process, then $(\bfT, \bfX\circ \bfT)\eqd(\bfT, \bfX\odot \bfT)$ in two cases: $\bfT$ is deterministic (see Proposition~\ref{detsubordcase}), or $\bfT$ is a pure-jump subordinator with finite activity (see Theorem \ref{orderjumpcaseprop}). In the former case, we can weaken the assumption to $\bfX\circ \bfT$ being a L\'evy process. Our proof tracks the construction of weak subordination in the deterministic subordinator and pure-jump subordinator cases mentioned above, and in the latter, the theory of marked Poisson point processes is used to verify that the relevant characteristics coincide.

We briefly mention some applications. The subordination of L\'evy processes is used in mathematical finance to create time-changed models of stock prices. This idea began with the work of \cite{MaSe90} who introduced the variance gamma (VG) process for modelling stock prices, created by subordinating a Brownian motion with a gamma subordinator. Subordination can also be applied to model dependence in multivariate price processes. The multivariate VG process in~\cite{MaSe90} was created by subordinating multivariate Brownian motion with a univariate gamma subordinator, so the components cannot have idiosyncratic time changes and must have equal kurtosis when there is no skewness. These deficiencies were addressed by the use of an alpha-gamma subordinator, resulting in the variance alpha-gamma process which was introduced in~\cite{Se08} and also considered in~\cite{Gu13,LS10}. However, in this case, the Brownian motion subordinate must have independent components. In both models, the use of strong subordination to create a L\'evy process restricts the dependence structure. By using weak subordination instead, and a general Brownian motion, \cite{BLM17c,BLM17a} introduced the weak variance alpha-gamma (WVAG) process to provide additional flexibility in dependence modelling while remaining tractable. The WVAG process exhibits a wider range of dependence while remaining parsimoniously parametrised, the marginal components have both common and idiosyncratic time changes, and are VG processes with possibly different levels of kurtosis. These weakly subordinated processes have been applied to option pricing \cite{MiSz17} and instantaneous portfolio theory \cite{Ma17}.

The paper is structured as follows. In Section \ref{sect2}, we review some notations, definitions, and preliminary results relating to L\'evy processes, weak subordination and Poisson random measures. In Section  \ref{sect3}, we state and prove the main result, namely that weak subordination is consistent with strong subordination under condition \ref{case3}, and that if strong subordination produces a L\'evy process it is necessarily equal in law to weak subordination when the subordinator is deterministic or pure-jump with finite activity. We conclude in Section \ref{sect4} with a brief discussion placing this work in the context of open questions relating to the subordination of multivariate L\'evy processes.

\section{Preliminaries}\label{sect2}

\subsection{L\'evy processes}
We write $\bfx=(x_1,\dots,x_n)\in\RR^n$ as a row vector. For $A\subseteq\RR^n$, let $A_*:=A\backslash\{{\bf 0}\}$ and let $\eins_A$ denote the indicator function for $A$. Let $\DD:=\{\bfx\in\RR^n:\|\bfx\|\le 1\}$ be the Euclidean unit ball centred at the origin. Let $\|\bfx\|^2_\Sigma:=\bfx\Sigma\bfx'$, where $\bfx\in\RR^n$ and $\Sigma\in\RR^{n\times n}$. Let $I:[0,\infty)\to[0,\infty)$ be the identity function.

For references on L\'evy processes, see \cite{bert96,sato99}. The law of an $n$-dimensional L\'evy process $\bfX=(X_1,\dots,X_n)=(\bfX(t))_{t\ge 0}$ is determined by its characteristic function $\Phi_\bfX:=\Phi_{\bfX(1)}$ with
\begin{align*}
    \Phi_{\bfX(t)}(\bftheta)\,:=\,\EE\exp(\rmi\skal\bftheta{\bfX(t)}) = \exp(t\Psi_\bfX(\bftheta)), \quad \text{$t\ge0$, $\bftheta\in\RR^n$},
\end{align*}
and characteristic exponent
\begin{align*}
    \Psi_\bfX(\bftheta):=
    \rmi \skal {\bfmu}\bftheta - \frac 12\|\bftheta\|^2_{\Sigma} +\int_{\RR^n_*}(e^{\rmi\skal\bftheta \bfx} - 1 - \rmi\skal\bftheta \bfx \eins_\DD(\bfx))\,\XXX(\rmd \bfx),
\end{align*}
where $\bfmu\in\RR^n$, $\Sigma\in\RR^{n\times n}$ is a covariance matrix,
and $\XXX$ is a L\'evy measure, that is a nonnegative Borel measure on $\RR^n_*$ such that
$\int_{\RR^n_*}(1\wedge \|\bfx\|^2)\,\XXX(\rmd \bfx)<\infty$. We write $\bfX\sim L^n(\bfmu,\Sigma,\XXX)$ (or $\bfX\sim L^n$ for short) to mean $\bfX$ is an $n$-dimensional L\'evy process with characteristic triplet $(\bfmu,\Sigma,\XXX)$.

An $n$-dimensional L\'evy process $\bfT$ with almost surely nondecreasing sample paths is called a subordinator and it is characterised by $\bfT \sim S^n(\bfd,\TTT) := L^n(\bfmu,0,\TTT)$ (or $\bfT\sim S^n$ for short), where $\bfd := \bfmu - \int_{\DD_*} \bft \,\TTT(\rmd \bft)\in[0,\infty)^n$ is the drift, and the L\'evy measure $\TTT$ satisfies $\TTT(([0,\infty)^n)^C) =0 $. The law of $\bfT$ is also characterised by its Laplace exponent $\Lambda_\bfT$, which satisfies $\EE[\exp(-\skal{\bflambda}{\bfT(1)})]=\exp(-\Lambda_\bfT(\bflambda))$, $\bflambda\in[0,\infty)^n$. For $\bfw,\bfz \in\CC^n$, let $\skal{\bfw}{\bfz} :=\sum_{k=1}^n w_kz_k$, noting that there is no conjugation. The domain of $\Lambda_\bfT$ can be extended, giving
\begin{align}\label{subordlaplaceexp}
    \Lambda_\bfT(\bfz)=\skal{\bfd}{\bfz}+\int_{[0,\infty)_*^n}(1-e^{-\skal\bfz\bft}) \,\TTT(\rmd \bft),\quad\Re\bfz\in[0,\infty)^n
\end{align}
(see the proof of \cite[Theorem 3.3]{BPS01}).

We have the following characterisation of piecewise constant L\'evy processes from \cite[Theorem 21.2]{sato99}.
\begin{lemma}\label{lem2}
    Let $\bfX\sim L^n(\bfmu,\Sigma,\XXX)$. The following are equivalent:
    \begin{enumerate}[(i)]
        \item $\bfX$ is piecewise constant a.s.;
        \item $\bfX$ is driftless, $\Sigma=0$ and $\XXX(\RR^n_*)<\infty$;
        \item $\bfX$ is a compound Poisson process or $\bfX$ is the zero process.
    \end{enumerate}
\end{lemma}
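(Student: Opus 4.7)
The plan is to establish the equivalences via the cycle (iii)$\Rightarrow$(i)$\Rightarrow$(ii)$\Rightarrow$(iii), each step being short because this is essentially a direct consequence of the L\'evy--It\^o decomposition and the L\'evy--Khintchine representation.

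First I would handle (iii)$\Rightarrow$(i) and (iii)$\Rightarrow$(ii) simultaneously, since both are immediate from the standard construction of a compound Poisson process as $\bfX(t) = \sum_{k=1}^{N(t)} \bfZ_k$, where $N$ is a Poisson process of some rate $\lambda>0$ and the $\bfZ_k$ are i.i.d.\ with a distribution $\mu$ on $\RR^n_*$ independent of $N$. Such a process is piecewise constant by inspection, and a standard computation of its characteristic function shows $\Psi_\bfX(\bftheta) = \lambda\int_{\RR^n_*}(e^{\rmi\skal\bftheta\bfx}-1)\,\mu(\rmd \bfx)$, which identifies its triplet as $(\int_\DD \bfx\,\XXX(\rmd \bfx), 0, \XXX)$ with $\XXX=\lambda \mu$, giving a finite L\'evy measure, no Gaussian component and zero drift in the sense of (ii). The zero process trivially satisfies both (i) and (ii).

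Next, for (i)$\Rightarrow$(ii), I would invoke the L\'evy--It\^o decomposition: writing $\bfX$ as the independent sum of a linear drift, a Brownian component with covariance $\Sigma$, and a jump part governed by a Poisson random measure with intensity $\rmd t \otimes \XXX$. A piecewise constant path a.s.\ admits no continuous non-trivial contribution, so the drift coefficient must vanish and $\Sigma=0$; if either were nonzero, sample paths would a.s.\ be non-constant on every open interval. Moreover, the jumps of $\bfX$ on a compact time interval are almost surely finite (since between jumps the path is constant), which by the basic properties of Poisson random measures forces $\XXX(\RR^n_*)<\infty$.

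Finally, for (ii)$\Rightarrow$(iii), assume the L\'evy triplet $(\bfmu, 0, \XXX)$ with $\XXX(\RR^n_*)<\infty$ and zero drift (so that $\bfmu = \int_\DD \bfx\,\XXX(\rmd \bfx)$). The compensation term and the drift then cancel in the L\'evy--Khintchine exponent, yielding
\begin{equation*}
    \Psi_\bfX(\bftheta) = \int_{\RR^n_*}(e^{\rmi\skal\bftheta\bfx}-1)\,\XXX(\rmd \bfx).
\end{equation*}
If $\XXX\equiv 0$, then $\Psi_\bfX\equiv 0$ and $\bfX$ is the zero process; otherwise set $\lambda:=\XXX(\RR^n_*)\in(0,\infty)$ and $\mu:=\XXX/\lambda$, which exhibits $\Psi_\bfX = \lambda(\widehat{\mu}-1)$ as the exponent of a compound Poisson process with rate $\lambda$ and jump distribution $\mu$. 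The main subtlety to watch is bookkeeping around the meaning of ``driftless'' in (ii): one must be careful to read it as vanishing of the true drift $\bfmu-\int_\DD \bfx\,\XXX(\rmd \bfx)$ rather than of $\bfmu$ itself, so that the compensation term in the L\'evy--Khintchine exponent genuinely disappears and the compound Poisson structure emerges cleanly.
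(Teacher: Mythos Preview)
Your proof is correct and well-organised; the cycle (iii)$\Rightarrow$(i)$\Rightarrow$(ii)$\Rightarrow$(iii) is the natural way to argue, and your care with the meaning of ``driftless'' is exactly the point that needs attention. However, there is nothing to compare against: the paper does not prove this lemma but simply cites it as \cite[Theorem~21.2]{sato99}, so your proposal supplies a proof where the paper offers only a reference.
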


\subsection{Weak subordination}
Let $\bfX\sim L^n$ and $\bfT\sim S^n(\bfd,\TTT)$. Let $\bft=(t_1,\dots,t_n)\in[0,\infty)^n$ and $\langle{(1),\dots,(n)}\rangle$ be a permutation of $\{1,\dots,n\}$ such that $t_{(1)} \leq \dots \leq t_{(n)}$, and define $\Delta t_{(k)}:=t_{(k)}-t_{(k-1)}$, $1\leq k \leq n$, with $t_{(0)}:=0$. Further, let  $\bfpi_{J}:\RR^n\to\RR^n$ be the projection onto the coordinate axes in $J\subseteq\{1,\dots,n\}$. For all $\bft\in[0,\infty)^n$, by \cite[Proposition 2.1]{BLM17a}, the random vector $\bfX(\bft):= (X_1(t_1),\dots, X_n(t_n))$ is infinitely divisible with characteristic exponent
\begin{align}
    (\bft\tr \Psi_\bfX)(\bftheta):={}&\sum_{k=1}^n\Delta t_{(k)} \Psi_\bfX(\bfpi_{\{(k),\dots,(n)\}}(\bftheta)),\quad \bftheta\in\RR^n, \label{multiexponent1}
\end{align}
and characteristic function
\begin{align}\label{xvecteqn}
    \Phi_{\bfX(\bft)}(\bftheta) = \exp({\bft \tr \Psi_{\bfX}(\bftheta)}).
\end{align}

It is convenient to consider both the subordinator and the subordinated process together as a joint $2n$-dimensional L\'evy process. In this form, we can define weak subordination as follows (see \cite[Proposition 3.1]{BLM17a}). 
\begin{definition}
    The \emph{weak subordination} of $\bfX\sim L^n$ and $\bfT\sim S^n(\bfd,\TTT)$ is the joint L\'evy process $\bfZ\eqd(\bfT,\bfX\odot \bfT)$ with characteristic exponent
    \begin{align}\label{propsupervisexpo}
        \Psi_\bfZ(\bftheta) = \rmi\skal{\bfd}{\bftheta_1}+(\bfd\tr\Psi_{\bfX})(\bftheta_2)+\int_{[0,\infty)^n_*} (\Phi_{(\bft,\bfX(\bft))}(\bftheta)-1)\,\TTT(\rmd \bft),
    \end{align}
    where $\bftheta=(\bftheta_1,\bftheta_2)$, $\bftheta_1,\bftheta_2\in\RR^n$.
\end{definition}
This is a valid characteristic exponent for a L\'evy process. Weak subordination can equivalently be defined in terms of a characteristic triplet. From \cite[Definition 2.1]{BLM17a}, if $\bfd =\bfnull$, then $\bfZ\sim L^{2n}(\bfm,\Theta,\ZZZ)$, where
\begin{align}
    \bfm &{}= \int_{\DD_*} (\bft,\bfx)\,\ZZZ(\rmd\bft,\rmd\bfx),\label{chartri1}\\
    \Theta &{}=0,\label{chartri2}\\
    \ZZZ(\rmd\bft,\rmd\bfx) &{} = \eins_{[0,\infty)^n_*\times \RR^n}(\bft,\bfx) \PP(\bfX(\bft)\in\rmd\bfx)\TTT(\rmd\bft).\nonumber
\end{align}
For additional details on weak subordination, see \cite{BLM17a}.

%
%
\subsection{Poisson random measures}
For references on Poisson random measures and their relationship to the jumps of L\'evy processes, see \cite{bert96,Cin11,Kin93}. A Poisson random measure (PRM) $\ZZ$ with intensity measure $\mu$ on a measurable space $(E,\EEE)$ is a random measure such that $\ZZ(A)\sim \operatorname{Poisson}(\mu(A))$ for all $A\in\EEE$, and $\ZZ(A_1),\dots\ZZ(A_m)$ are independent for all disjoint $A_1,\dots,A_m\in\EEE$. In general, a PRM has the form $\ZZ=\sum_{i=1}^\infty \bfdelta_{\bfZ_i}$, where $\bfdelta_{\bfZ_i}$, $i\in\NN$, is the Dirac measure at the random vector $\bfZ_i$ taking values in $(E,\EEE)$. Define the random variable $Z_f:=\int_{E} f(\bfx)\,\ZZ(\rmd \bfx)$, where $f$ is a nonnegative, $\EEE$-measurable real function.  The Laplace functional of $\ZZ$ is
\begin{align}\label{genlf}
    L(f):=\EE\left[ e^{-Z_f}\right] = \EE\left[\prod_{i=1}^\infty e^{-f(\bfZ_i)} \right] = \exp\left(-\int_E (1-e^{-f(\bfx)})\,\mu(\rmd \bfx)  \right) 
\end{align}
(see \cite[Equation (3.35)]{Kin93}). The Laplace functional is well-defined with $L(f)\in[0,1]$, where this equality can be interpreted as 0 if $Z_f<\infty$ a.s.\ fails. 
Two PRMs are equal if their Laplace functionals are \cite[Chapter VI, Proposition 1.4]{Cin11}.


Let $\bfX\sim L^n(\bfmu,\Sigma,\XXX)$ with $\XXX\neq 0$. For a fixed sample path, a time $t$ is a jumping time of $\bfX$ if the jump $\Delta \bfX(t):=\bfX(t)-\bfX(t-)\neq \bfnull$. The following result is \cite[Theorem~21.3]{sato99}.

\begin{lemma}\label{countjumplem}
    Let $\bfX\sim L^n(\bfmu,\Sigma,\XXX)$ with $\XXX\neq 0$, then its jumping times are countably infinite. Denoting these jumping times as $\bfS= (S_i)_{i\in\NN}$, we have in addition:
    \begin{enumerate}[(i)]
        \item if $\XXX(\RR^n_*)<\infty$, then $\bfS$ is countable in increasing order, or
        \item if $\XXX(\RR^n_*)=\infty$, then $\bfS$ is dense in $[0,\infty)$.
    \end{enumerate}
\end{lemma}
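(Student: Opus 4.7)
The plan is to ground the argument in the Lévy--Itô decomposition, under which the jump part of $\bfX$ is represented by a Poisson random measure $N$ on $(0,\infty)\times\RR^n_*$ with intensity $\rmd t\otimes \XXX(\rmd \bfx)$. The jumping times of $\bfX$ are exactly the time-coordinates of the atoms of $N$, so every claim about $\bfS$ reduces to a statement about the counts $N(I\times A)$ for time intervals $I\subseteq[0,\infty)$ and Borel sets $A\subseteq\RR^n_*$. This Poisson structure will carry the entire argument.

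For the countable-infinitude assertion, I would first note that a càdlàg path has at most countably many discontinuities, so $\bfS$ is automatically at most countable almost surely. To upgrade this to infinite, pick an exhaustion $A_k\uparrow\RR^n_*$ by Borel sets with $0<\XXX(A_k)<\infty$ (possible because $\XXX$ is a nonzero L\'evy measure on $\RR^n_*$ and is therefore $\sigma$-finite there); then $N([0,\infty)\times A_k)\sim \operatorname{Poisson}(\infty)=+\infty$ a.s., so infinitely many jumping times exist almost surely.

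For part (i), the finite-total-mass assumption $\XXX(\RR^n_*)<\infty$ makes the global counting process $N_t:=N([0,t]\times\RR^n_*)$ an ordinary homogeneous Poisson process with finite positive rate $\lambda=\XXX(\RR^n_*)$. Consequently $N_t<\infty$ for each finite $t$ a.s., so only finitely many jumps occur on any bounded interval and the jumping times can be enumerated in strictly increasing order $0<S_1<S_2<\cdots$ with $S_i\to\infty$ almost surely.

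For part (ii), fix an open subinterval $(a,b)\subseteq[0,\infty)$ and write $A_\epsilon:=\{\bfx\in\RR^n_*:\|\bfx\|\ge\epsilon\}$. Since $\XXX(\RR^n_*)=\infty$, letting $\epsilon\downarrow 0$ along a sequence $\epsilon_k\downarrow 0$ gives $\XXX(A_{\epsilon_k})\uparrow\infty$, and by independence of the counts $N((a,b)\times(A_{\epsilon_{k+1}}\setminus A_{\epsilon_k}))$ over disjoint annular shells one obtains $N((a,b)\times\RR^n_*)=+\infty$ a.s. Intersecting the resulting full-measure events over a countable base of open subintervals with rational endpoints then yields density of $\bfS$ in $[0,\infty)$ almost surely. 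The step I expect to require the most care is this passage from \emph{positive probability} of a jump in a strip to an a.s.\ infinite count, which rests on the independence and monotonicity properties of the PRM; everything else is routine L\'evy--It\^o bookkeeping.
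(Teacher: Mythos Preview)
Your argument is correct. The paper does not actually prove this lemma: it simply records it as \cite[Theorem~21.3]{sato99} and moves on. Your proposal supplies the standard proof via the Poisson random measure of jumps, which is exactly the route taken in Sato's text, so there is no substantive difference to discuss. One minor remark: your worry about passing from ``positive probability of a jump in a strip'' to ``a.s.\ infinite count'' is unfounded in the sense that the monotone-limit argument you sketch (take $A_{\epsilon_k}\uparrow\RR^n_*$, use that $N((a,b)\times A_{\epsilon_k})\sim\operatorname{Poisson}((b-a)\XXX(A_{\epsilon_k}))$ is nondecreasing in $k$ with parameter tending to $\infty$) already gives the a.s.\ conclusion directly, without needing the shell-independence detour.
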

For the L\'evy process $\bfX$, the countable sequence of random vectors giving the time and size of the jumps, $(\bfZ_{i})_{i\in\NN} := (t,\Delta \bfX(t))_{t>0,\Delta \bfX(t)\neq \bfnull}$, is a Poisson point process. Consequently, $\ZZ=\sum_{i=1}^\infty \bfdelta_{\bfZ_i}$ is the PRM of $\bfX$ (or of the jumps of $\bfX$), defined on the Borel space $([0,\infty)\times\RR^n_*,\BBB([0,\infty)\times\RR^n_*))$ with intensity measure $\rmd t \otimes \XXX$ \cite[Chapter I, Theorem 1]{bert96}.

%


\section{Main results}\label{sect3}

\subsection{Consistency of weak subordination for stacked univariate subordination}

Here, we show that the law of weak and strong subordination coincide when the latter satisfies the stacked univariate subordination property in condition \ref{case3}, and hence is a L\'evy process. The proof follows along the lines of \cite[Proposition 3.3]{BLM17a}.

\begin{theorem}\label{propsupextendssub} Let $\bfT\sim S^n$ and $\bfX\sim L^n$ be independent. If $\bfT$ and $\bfX$ satisfy the stacked univariate subordination condition in \eqref{stacksubord}, then $(\bfT,\bfX\circ \bfT)\eqd(\bfT,\bfX\odot \bfT)$.
    
\end{theorem}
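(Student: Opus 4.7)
The plan is to show that both $(\bfT,\bfX\circ\bfT)$ and $(\bfT,\bfX\odot\bfT)$ are $2n$-dimensional L\'evy processes with identical characteristic exponents, whence they coincide in law. The latter is L\'evy by construction, while for the former I would observe that, up to a permutation of coordinates, $(\bfT,\bfX\circ\bfT)$ is the strong subordination of the $2n$-dimensional stacked subordinate $(I\bfe_1,\bfY_1,\dots,I\bfe_d,\bfY_d)$ by $(R_1\bfe_1',\dots,R_d\bfe_d')$ with $\bfe_m'\in\RR^{2n_m}$, which itself satisfies condition \ref{case3} and so is L\'evy.

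Write $\bftheta_j=(\bftheta_j^{(1)},\dots,\bftheta_j^{(d)})$ with $\bftheta_j^{(m)}\in\RR^{n_m}$, and set $\bar\theta_{1,m}:=\skal{\bftheta_1^{(m)}}{\bfe_m}$ together with $z_m:=\rmi\bar\theta_{1,m}+\Psi_{\bfY_m}(\bftheta_2^{(m)})$, noting $\Re z_m\le 0$. On the strong-subordination side, conditioning on $\bfR:=(R_1,\dots,R_d)$ and using independence of $\bfY_1,\dots,\bfY_d$ gives
$$\EE\bigl[\exp\bigl(\rmi\skal{\bftheta_1}{\bfT(1)}+\rmi\skal{\bftheta_2}{\bfX\circ\bfT(1)}\bigr)\bigr]=\EE\bigl[\exp\bigl(\textstyle\sum_{m=1}^d R_m z_m\bigr)\bigr]=\exp\bigl(-\Lambda_\bfR(-\bfz)\bigr),$$
after invoking the extension of $\Lambda_\bfR$ to complex arguments with nonpositive real parts from \eqref{subordlaplaceexp}.

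For the weak-subordination side, I would substitute into \eqref{propsupervisexpo}, first recording that the drift of $\bfT$ is $(d_1\bfe_1,\dots,d_d\bfe_d)$ with $\bfd_\bfR=(d_1,\dots,d_d)$ the drift of $\bfR$, and that the L\'evy measure $\TTT$ of $\bfT$ is the pushforward of the L\'evy measure of $\bfR$ under $\bfr\mapsto(r_1\bfe_1,\dots,r_d\bfe_d)$. The key identity is that for stacked $\bft=(r_1\bfe_1,\dots,r_d\bfe_d)$, formula \eqref{multiexponent1} telescopes to
$$(\bft\tr\Psi_\bfX)(\bftheta_2)=\sum_{m=1}^d r_m\Psi_{\bfY_m}(\bftheta_2^{(m)}),$$
because, ordering the coordinates of $\bft$ so that stacks appear in nondecreasing order of $r_m$, the increments $\Delta t_{(k)}$ vanish within stacks and collapse to the stackwise increments, while $\Psi_\bfX$ decomposes additively across stacks by independence of the $\bfY_m$. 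Via \eqref{xvecteqn}, this gives $\Phi_{(\bft,\bfX(\bft))}(\bftheta)=\exp(\sum_m r_m z_m)$, and the same telescoping applied to the drift yields $(\bfd\tr\Psi_\bfX)(\bftheta_2)=\sum_m d_m\Psi_{\bfY_m}(\bftheta_2^{(m)})$. Plugging these into \eqref{propsupervisexpo} and comparing with \eqref{subordlaplaceexp} for $\Lambda_\bfR(-\bfz)$ gives $\Psi_{(\bfT,\bfX\odot\bfT)}(\bftheta)=-\Lambda_\bfR(-\bfz)$, matching the strong side.

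The main obstacle is the combinatorial verification that \eqref{multiexponent1} telescopes cleanly for stacked $\bft$: ties in the ordering must be handled (all $n_m$ indices in stack $m$ share the value $r_m$) so that intra-stack increments drop out, and one must simultaneously track how the projections $\bfpi_{\{(k),\dots,(n)\}}$ interact with the block-additive decomposition $\Psi_\bfX(\bftheta)=\sum_m\Psi_{\bfY_m}(\bftheta^{(m)})$ arising from independence. Once this identity is in hand, the remaining algebra and the analytic extension of $\Lambda_\bfR$ are routine.
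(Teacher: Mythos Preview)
Your proposal is correct and follows essentially the same route as the paper: both arguments first verify that $(\bfT,\bfX\circ\bfT)$ is a $2n$-dimensional L\'evy process by recasting it as a stacked univariate subordination, then reduce the comparison of characteristic exponents to the telescoping identity $(\bft\tr\Psi_\bfX)(\bftheta_2)=\sum_m r_m\Psi_{\bfY_m}(\bftheta_2^{(m)})$ for stacked $\bft$, and finally match both sides via the Laplace exponent $\Lambda_\bfR$ at the complex argument $\pm\bfz$. The paper differs only cosmetically---it uses the subordinate/subordinator pair $(I\bfe,\bfX)$, $(\bfT,\bfT)$ rather than your permuted blockwise version, writes $\bfT=\bfR A$ in matrix form rather than as a pushforward, and uses the opposite sign convention for $\bfz$.
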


\begin{proof}
    Recall that $n_1+\dots+n_d=n$ and let $\bftheta=(\bftheta_1,\bftheta_2) = (\bftheta_{11},\dots \bftheta_{1d},\bftheta_{21},\dots,\allowbreak \bftheta_{2d})$, $\bftheta_1,\bftheta_2\in\RR^n$, $\bftheta_{1m},\bftheta_{2m}\in\RR^{n_m}$ for all $1\leq m \leq d$. Since $\bfT$ and $\bfX$ are independent processes, using \eqref{xvecteqn} and conditioning on $\bfT$, we get
    \begin{align}\label{charsubord}
        \Phi_{(\bfT,\bfX\circ \bfT)}(\bftheta)=\EE[\exp(\rmi \skal{\bftheta_1}{\bfT(1)}+(\bfT(1)\tr\Psi_{\bfX})(\bftheta_2))].
    \end{align}
    Let $\bfe=(1,\dots,1)\in\RR^n$. Since $\bfT$ and $\bfX$ satisfy the stacked univariate subordination condition for $n$-dimensional processes, the subordinator $(\bfT,\bfT)$ and the subordinate $(I\bfe,\bfX)$ satisfy this condition for $2n$-dimensional processes. Thus, $(I\bfe,\bfX)\circ (\bfT,\bfT)= (\bfT,\bfX\circ \bfT)$ is a L\'evy process by \cite[Theorem 3.3]{BPS01}, so it suffices to show that $\Psi_{(\bfT,\bfX\circ \bfT)}=\Psi_{(\bfT,\bfX\odot \bfT)}$.

    Noting that $\bfX = (\bfY_1,\dots, \bfY_d)$, where $\bfY_1\sim L^{n_1},\dots, \bfY_d\sim L^{n_d}$ are independent L\'evy processes, Kac's theorem gives
    \begin{align*}
        \Psi_\bfX(\bftheta_2) =\sum_{m=1}^d \Psi_{\bfY_m}(\bftheta_{2m}).
    \end{align*}
    Form the partition $\{1,\dots,n\}= J_1\cup\dots \cup J_d$, where $J_1 := \{1,\dots,n_1\}, J_2 := \{n_1+1,\dots,n_1+n_2\},\dots, J_d :=\{n_1+\dots+n_{d-1}+1, \dots, n\}$. Let $\bfr=(r_1,\dots,r_d)\in[0,\infty)^d$ and $\langle(1),\dots,(d)\rangle$ be a permutation of $\{1,\dots, d\}$ such that $r_{(1)}\leq \dots \leq r_{(d)}$. Define the projections $\bfpi_m:= \bfpi_{J_{(m)} \cup\dots \cup J_{(d)}}$, $1\leq m \leq d$. Thus, for all $1\leq m\leq d$,
    \begin{align}
        \Psi_\bfX(\bfpi_m(\bftheta_2)) =\sum_{k=m}^d \Psi_{\bfY_{(k)}}(\bftheta_{2(k)}).\label{rapsi2}
    \end{align}
    Next, due to \eqref{stacksubord}, we can write $\bfT= \bfR A$ for some $A\in\RR^{d\times n}$, where $\bfR= (R_1,\dots,R_d)\sim S^d(\bfd,\RRR)$. Then \eqref{multiexponent1} and \eqref{rapsi2} gives
    \begin{align}
        (\bfr A)\tr \Psi_{\bfX}(\bftheta_2) &{}= \sum_{m=1}^d (r_{(m)}-r_{(m-1)})\Psi_{\bfX}(\bfpi_{m}(\bftheta_2))\nonumber\\
        &{}= \sum_{m=1}^{d-1} r_{(m)}(\Psi_{\bfX}(\bfpi_{m}(\bftheta_2)) - \Psi_{\bfX}(\bfpi_{m+1}(\bftheta_2))) + r_{(d)}\Psi_{\bfX}(\bfpi_{d}(\bftheta_2))\nonumber\\
        &{}= \sum_{m=1}^d r_{(m)}\Psi_{\bfY_{(m)}}(\bftheta_{2(m)})\nonumber\\
        &{}=\sum_{m=1}^d r_{m}\Psi_{\bfY_m}(\bftheta_{2m}).\label{rapsi3}
    \end{align}
    
    Let $\bfz:=(z_1,\dots,z_d)\in\CC^d$, where $z_m = -\rmi\skal{\bftheta_{1m}}{\bfe_m}-\Psi_{\bfY_m}(\bftheta_{2m})$, $1 \leq m \leq d$. Using \eqref{rapsi3}, we have
    \begin{align}
        -\skal{\bfz}{\bfr}&{}=\sum_{m=1}^d  \rmi\skal{\bftheta_{1m}}{r_m\bfe_m}+r_m\Psi_{\bfY_m}(\bftheta_{2m})\nonumber\\
        &{}=\rmi\skal{\bftheta_1}{\bfr A} + (\bfr A)\tr \Psi_{\bfX}(\bftheta_2)\label{z_prop}.
    \end{align}
    Thus, $\eqref{charsubord}$ becomes $\Phi_{(\bfT,\bfX\circ \bfT)}(\bftheta)=\EE[\exp(-\skal{\bfz}{\bfR(1)})]$. By noting that $\Re{\bf z}\in[0,\infty)^d$ and using \eqref{subordlaplaceexp} to obtain the Laplace exponent of $\bfR$, we have $\Psi_{(\bfT,\bfX\circ \bfT)}(\bftheta)=-\Lambda_\bfR(\bfz)$, where
    \begin{align*}
        \Lambda_\bfR(\bfz)=\skal{\bfd}{\bfz}+\int_{[0,\infty)_*^d}(1-e^{-\skal {\bfz}\bfr})\,\RRR(\rmd \bfr).
    \end{align*}
    Using \eqref{xvecteqn} and \eqref{z_prop}, we have $e^{-\skal {\bfz}\bfr}=\Phi_{(\bfr A,\bfX(\bfr A))}(\bftheta)$ for $\bfr\in[0,\infty)^d_*$. Thus,
    \begin{align*}
        \Psi_{(\bfT,\bfX\circ \bfT)}(\bftheta) ={}&\rmi\skal{\bftheta_1}{\bfd A} + (\bfd A)\tr \Psi_{\bfX}(\bftheta_2)+\int_{[0,\infty)^d_*} (\Phi_{(\bfr A ,\bfX(\bfr A))}(\bftheta)-1)\,\RRR(\rmd \bfr)\\
        ={}& \rmi\skal{\bftheta_1}{\bfd A} + (\bfd A)\tr \Psi_{\bfX}(\bftheta_2)\\
        {}&\quad+\int_{[0,\infty)^n_*} (\Phi_{(\bft,\bfX(\bft))}(\bftheta)-1)\,(\RRR\circ A^{-1}) (\rmd \bft)
    \end{align*}
    by the transformation theorem. This matches the RHS of \eqref{propsupervisexpo} because $\bfT \sim S^n(\bfd A, \allowbreak\RRR\circ A^{-1})$. Therefore, $(\bfT,\bfX\circ \bfT)\eqd(\bfT,\bfX\odot \bfT)$.
\end{proof}

\subsection{Necessity of weak subordination for deterministic and pure-jump, finite activity subordinators}

In this subsection, we assume that $(\bfT,\bfX\circ\bfT)\sim L^{2n}$, and under some conditions, we show that it is equal in law to $(\bfT,\bfX\odot\bfT)$. This is in contrast to Theorem~\ref{propsupextendssub}, where we proved under a condition for which it is known that $(\bfT,\bfX\circ\bfT)\sim L^{2n}$, then it is equal in law to $(\bfT,\bfX\odot\bfT)$. The conditions we consider in this subsection are that the subordinator $\bfT$ is deterministic, or that $\bfT$ is pure-jump with finite activity, which are dealt with in Proposition~\ref{detsubordcase} and Theorem \ref{orderjumpcaseprop}, respectively. In the former case, we use the weaker assumption $\bfX\circ \bfT\sim L^{n}$.

\begin{proposition}\label{detsubordcase}
    Let $\bfT\sim S^n(\bfd,0)$ and $\bfX \sim L^{n}$ be independent, with $\bfd\in[0,\infty)^n$. If  $\bfX\circ \bfT\sim L^{n}$, then $(\bfT,\bfX\circ\bfT)\eqd (\bfT,\bfX\odot\bfT)$.
\end{proposition}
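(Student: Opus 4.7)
The plan is to reduce both sides to explicit characteristic exponents and observe they match. Since $\bfT\sim S^n(\bfd,0)$ has zero L\'evy measure, its paths are deterministic: $\bfT(t)=\bfd t$ almost surely for every $t\ge 0$, which in particular makes the independence hypothesis automatic. Consequently the joint process $(\bfT,\bfX\circ\bfT)$ is obtained from $\bfX\circ\bfT$ by adjoining the deterministic coordinate $t\mapsto\bfd t$, so the hypothesis $\bfX\circ\bfT\sim L^n$ is equivalent to $(\bfT,\bfX\circ\bfT)\sim L^{2n}$. Since also $(\bfT,\bfX\odot\bfT)\sim L^{2n}$ by construction, it suffices to show that the two characteristic exponents coincide.

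For the strong side, evaluating at $t=1$ gives $(\bfT(1),\bfX(\bfT(1)))=(\bfd,\bfX(\bfd))$, and because the first component is a constant the joint characteristic function factors as
\begin{equation*}
\Phi_{(\bfT(1),\bfX(\bfT(1)))}(\bftheta_1,\bftheta_2)=e^{\rmi\skal{\bftheta_1}{\bfd}}\,\Phi_{\bfX(\bfd)}(\bftheta_2).
\end{equation*}
Applying \eqref{xvecteqn} with $\bft=\bfd$ yields $\Phi_{\bfX(\bfd)}(\bftheta_2)=\exp((\bfd\tr\Psi_\bfX)(\bftheta_2))$, so
\begin{equation*}
\Psi_{(\bfT,\bfX\circ\bfT)}(\bftheta)=\rmi\skal{\bftheta_1}{\bfd}+(\bfd\tr\Psi_\bfX)(\bftheta_2).
\end{equation*}
On the weak side, substituting $\TTT=0$ into \eqref{propsupervisexpo} annihilates the integral and leaves exactly the same expression for $\Psi_{(\bfT,\bfX\odot\bfT)}(\bftheta)$, which finishes the argument.

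I do not anticipate any serious obstacle: the proposition is essentially a definitional unwinding once one recognises that a zero-measure subordinator kills the pure-jump contribution in \eqref{propsupervisexpo}. The role of the hypothesis $\bfX\circ\bfT\sim L^n$ is only to ensure that equality of the time-$1$ characteristic functions upgrades to equality in law of the entire processes; one-time infinite divisibility of $\bfX(\bfd)$ always holds via \eqref{xvecteqn}, but does not by itself prevent the joint distributions of $\bfX(\bfd t)$ at several times from failing the L\'evy property, so this hypothesis cannot be dropped without further argument.
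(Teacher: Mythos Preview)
Your proof is correct and follows essentially the same route as the paper: reduce to showing equality of characteristic exponents, compute $\Psi_{(\bfT,\bfX\circ\bfT)}$ via \eqref{xvecteqn}, and compare with \eqref{propsupervisexpo} with $\TTT=0$. The only cosmetic difference is that the paper verifies $(\bfT,\bfX\circ\bfT)\sim L^{2n}$ by explicitly multiplying the stationary-independent-increment identity for $\bfX\circ\bfT$ by the deterministic factor $\prod_k\exp(\rmi\skal{\bftheta_{1k}}{(t_{k+1}-t_k)\bfd})$, whereas you invoke the (equally valid) general observation that adjoining a deterministic linear drift coordinate to a L\'evy process preserves the L\'evy property.
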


\begin{proof}
    Since $\bfT$ is deterministic, $\bfT=\bfd I$. The stationary and independent increment property of the L\'evy process $\bfX\circ\bfT\sim L^{n}$ is equivalent to, for all $0\le t_1<\dots <t_{m+1}$, $m\geq1$, 
    \begin{align*}
        \hspace{2em}&\hspace{-2em}  \EE \left[ \prod_{k=1}^{m} \exp(\rmi\skal{\bftheta_{2k}}{\bfX(t_{k+1}\bfd )-\bfX(t_k\bfd)})\right] \\
        ={}&\prod_{k=1}^{m} \EE \left[ \exp(\rmi\skal{\bftheta_{2k}}{\bfX((t_{k+1}-t_k)\bfd)})\right],\quad  \bftheta_{21},\dots, \bftheta_{2m}\in\RR^n.
    \end{align*}
    Multiplying both sides by $\prod_{k=1}^{m} \exp(\rmi\skal{\bftheta_{1k}}{(t_{k+1}-t_k)\bfd})$, $\bftheta_{11},\dots, \bftheta_{1m}\in\RR^n$, shows that $(\bfT,\bfX\circ\bfT)$ also has stationary and independent increments. Further, $(\bfT,\bfX\circ\bfT)(0)=\bfnull$ a.s.\ and the sample paths of $(\bfT,\bfX\circ\bfT)$ are a.s.\ c\`adl\`ag. Thus,  $(\bfT,\bfX\circ\bfT)\sim L^{2n}$, so we just need to verify $\Psi_{(\bfT,\bfX\circ \bfT)}=\Psi_{(\bfT,\bfX\odot \bfT)}$. 
    
    Let $\bftheta=(\bftheta_1,\bftheta_2)$, $\bftheta_1,\bftheta_2\in\RR^n$. Noting that $\bfT=\bfd I$ and using \eqref{xvecteqn}, we have
    \begin{align*}
        \Psi_{(\bfT,\bfX\circ \bfT)}(\bftheta)=\rmi \skal{\bftheta_1}{\bfd}+(\bfd\tr\Psi_{\bfX})(\bftheta_2),
    \end{align*}
    which is the same as $\Psi_{(\bfT,\bfX\odot \bfT)}(\bftheta)$ from \eqref{propsupervisexpo}.
\end{proof}

We now consider the case where $\bfT$ is a pure-jump subordinator. The following lemma establishes the Laplace functional corresponding to the PRM of the weakly subordinated process $(\bfT,\bfX\odot\bfT)$ without the finite activity assumption on $\bfT$. It is based on the marked Poisson point process of jumps of $(\bfT,\bfX\odot\bfT)$ given in the proof of \cite[Theorem 2.1 (ii)]{BLM17a}, and closely follows the arguments in the proof of \cite[Chapter VI, Theorem 3.2]{Cin11}.

Throughout the rest of this section, we let $E:=[0,\infty)\times[0,\infty)^n_*\times \RR^n$, and $Q$ be the mapping $(\bft^*,B)\mapsto \PP(\bfX(\bft)\in B)$ for $\bft^*:=(t,\bft)\in [0,\infty)\times [0,\infty)^n_*$ and Borel sets $B\subseteq\RR^n$. 

\begin{lemma}\label{prmws}
    Assume $\bfT\sim S^n(\bfnull,\TTT)$, $\TTT\neq 0$, $\bfX\sim L^n$. The PRM of the L\'evy process $(\bfT,\bfX\odot\bfT)$, denoted $\ZZ_{\odot}$, has Laplace functional 
    \begin{align}
        \EE\left[\exp \left(-\int_{E} f(\bft^*,\bfy)\,\ZZ_{\odot}(\rmd \bft^*, \rmd \bfy) \right)\right]= \EE \left[\prod_{i=1}^\infty  e^{-g(\bfT^*_i)} \right],\label{eq1}
    \end{align}
    where $f$ is a nonnegative, measurable real function,
    \begin{align*}
        e^{-g(\bft^*)}={}&\int_{\RR^n}  e^{-f(\bft^*,\bfy)} \,Q(\bft^*,\rmd\bfy),\\
        (\bfT^*_i)_{i\in\NN} :={}& (t,\Delta \bfT (t))_{t>0,\Delta \bfT (t)\neq\bfnull}.
    \end{align*}
\end{lemma}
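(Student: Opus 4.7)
The plan is a direct two-sided computation using the Laplace functional formula \eqref{genlf}: I would evaluate both sides of \eqref{eq1} and show they produce the same exponential expression. The key observation that makes everything work is that $Q(\bft^*,\cdot)$ is a probability measure, so integration against $Q$ preserves the constant term in the $1-e^{-f}$ factor.

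First I would identify the PRM $\ZZ_\odot$. Because $\bfT$ has zero drift, \eqref{chartri1}--\eqref{chartri2} give $(\bfT,\bfX\odot\bfT)\sim L^{2n}(\bfm,0,\ZZZ)$ with
\[
\ZZZ(\rmd\bft,\rmd\bfy)=\eins_{[0,\infty)^n_*\times\RR^n}(\bft,\bfy)\,Q(\bft^*,\rmd\bfy)\,\TTT(\rmd\bft).
\]
By the description of the jump PRM of a L\'evy process at the end of Section \ref{sect2}, $\ZZ_\odot$ is then a PRM on $E$ with intensity $\rmd t\otimes \ZZZ$. Plugging into \eqref{genlf} gives
\[
\EE\left[\exp\left(-\int_E f\,d\ZZ_\odot\right)\right]=\exp\left(-\int_E\bigl(1-e^{-f(\bft^*,\bfy)}\bigr)Q(\bft^*,\rmd\bfy)\,\TTT(\rmd\bft)\,\rmd t\right).
\]
Next I would carry out the inner integration in $\bfy$: since $Q(\bft^*,\cdot)$ is a probability measure, $\int_{\RR^n}(1-e^{-f(\bft^*,\bfy)})Q(\bft^*,\rmd\bfy)=1-e^{-g(\bft^*)}$, so the expression collapses to
\[
\exp\left(-\int_{[0,\infty)\times[0,\infty)^n_*}\bigl(1-e^{-g(\bft^*)}\bigr)\,\TTT(\rmd\bft)\,\rmd t\right).
\]

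To match the right-hand side of \eqref{eq1}, I would recognise that the jump times/sizes $(\bfT^*_i)_{i\in\NN}$ of $\bfT\sim S^n(\bfnull,\TTT)$ form a PRM $\ZZ_\bfT:=\sum_{i=1}^\infty\bfdelta_{\bfT^*_i}$ on $[0,\infty)\times[0,\infty)^n_*$ with intensity $\rmd t\otimes\TTT$, again by the end-of-Section-\ref{sect2} description together with Lemma \ref{countjumplem}. A second application of \eqref{genlf}, this time to $\ZZ_\bfT$ and $g$, gives $\EE[\prod_i e^{-g(\bfT^*_i)}]=\EE[\exp(-\int g\,d\ZZ_\bfT)]$ equal to the same exponential, completing the identification.

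I do not expect a serious obstacle; the content is essentially the marking theorem packaged into Laplace functionals, closely mirroring \cite[Ch.~VI, Thm 3.2]{Cin11}. The minor technical points to check are joint measurability of $g$ (which follows from Tonelli's theorem applied to the nonnegative integrand $e^{-f}$ against the probability kernel $Q$) and the Fubini exchange in the double integral over $E$, both of which are routine for nonnegative integrands.
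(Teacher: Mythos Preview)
Your argument is correct, and it takes a different route from the paper. The paper works probabilistically: it invokes the marked Poisson point process construction from \cite[Theorem 2.1(ii)]{BLM17a}, writes the jumps of $(\bfT,\bfX\odot\bfT)$ as $(\bfT^*_i,\bfY_i)$ with the marks $\bfY_i$ conditionally independent given $\bfT^*=(\bfT^*_i)$ with law $Q(\bfT^*_i,\cdot)$, and then computes the Laplace functional by conditioning on $\bfT^*$ and pulling the product through the conditional expectation. You instead bypass the marking structure entirely: you read off the intensity $\rmd t\otimes\ZZZ$ of $\ZZ_\odot$ from the characteristic triplet, apply \eqref{genlf} once, integrate out $\bfy$ using that $Q(\bft^*,\cdot)$ is a probability measure, and recognise the result as a second application of \eqref{genlf} to the PRM of $\bfT$. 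Your approach is more self-contained (it does not need the external reference to the marked PPP construction) and arguably cleaner; the paper's approach makes the conditional-independence-of-marks structure explicit, which is precisely the template reused in the proof of Theorem~\ref{orderjumpcaseprop}, so it has some expository value in that context.
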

\begin{proof}
    Noting that the L\'evy process $(\bfT,\bfX\odot\bfT)$ has only countably many jumps by Lemma \ref{countjumplem}, the Poisson point process of jumps can be written as $(t,\Delta \bfT (t),\allowbreak \Delta(\bfX\odot \bfT)(t))_{t>0,\Delta \bfT (t)\neq\bfnull} =: (\bfT^*_i,\bfY_i)_{i\in\NN}$. Here, $\bfT^*_i$ and $\bfY_i$ are random vectors that take values in $[0,\infty)\times [0,\infty)^n_*$ and $\RR^n$, respectively. So the PRM of $(\bfT,\bfX\odot\bfT)$ is
    \begin{align*}
        \ZZ_{\odot}(\rmd \bft^*, \rmd\bfy)=\sum_{i=1}^\infty \bfdelta_{(\bfT^*_i,\bfY_i)}(\rmd\bft^*, \rmd\bfy).
    \end{align*}
    Now $Q$ is a probability kernel, and $\ZZ_{\odot}$ corresponds to a marked Poisson point process with marks $(\bfY_i)_{i\in\NN}$, and $\bfY_i$, $i\in\NN$, are conditionally independent given $\bfT^*:=(\bfT^*_i)_{i\in\NN}$ with probability distribution $Q(\bfT^*_i,\rmd \bfy)$ (see  the proof of \cite[Theorem 2.1 (ii)]{BLM17a}). Consequently, the Laplace functional of $\ZZ_{\odot}$ is
    \begin{align*}
        \EE\left[\exp \left(-\int_{E} f(\bft^*,\bfy)\,\ZZ_{\odot}(\rmd \bft^*, \rmd \bfy) \right)\right] &= \EE \left[ \EE\left[\prod_{i=1}^\infty e^{-f(\bfT^*_i,\bfY_i)} \givenm \bfT^* \right]\right] \nonumber\\
        &= \EE \left[\prod_{i=1}^\infty \EE\left[ e^{-f(\bfT^*_i,\bfY_i)}\givenm \bfT^* \right]\right]\nonumber \\
        &= \EE \left[\prod_{i=1}^\infty\int_{\RR^n}  e^{-f(\bfT^*_i,\bfy)} \,Q(\bfT^*_i,\rmd\bfy) \right] \nonumber\\
        &= \EE \left[\prod_{i=1}^\infty  e^{-g(\bfT^*_i)} \right],
    \end{align*}
    as required.
\end{proof}


\begin{theorem} \label{orderjumpcaseprop} 
    Let  $\bfT\sim S^n(\bfnull,\TTT)$ and $\bfX \sim L^{n}$ be independent, with $\TTT\neq 0$ and $\TTT([0,\infty)^n_*)<\infty$. If $(\bfT,\bfX\circ\bfT)\sim L^{2n}$, then $(\bfT,\bfX\circ\bfT)\eqd (\bfT,\bfX\odot\bfT)$.
\end{theorem}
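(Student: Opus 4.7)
The strategy is to show that $(\bfT,\bfX\circ\bfT)$ must itself be a compound Poisson Lévy process under the hypotheses, compute its Lévy measure by analysing the very first jump, and match it with the Lévy measure $\ZZZ$ of $(\bfT,\bfX\odot\bfT)$ from the display below \eqref{chartri2}.

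First, two applications of Lemma \ref{lem2} establish the compound Poisson structure. Since $\bfT\sim S^n(\bfnull,\TTT)$ has zero drift, zero Gaussian component, and $0<\lambda:=\TTT([0,\infty)^n_*)<\infty$, Lemma \ref{lem2} says $\bfT$ is compound Poisson with piecewise-constant sample paths a.s. Between jumps of $\bfT$ each coordinate $T_i$ is constant, hence so is $X_i\circ T_i$, so $(\bfT,\bfX\circ\bfT)$ is piecewise constant a.s. Combining with the hypothesis $(\bfT,\bfX\circ\bfT)\sim L^{2n}$ and the non-triviality from $\TTT\neq 0$, Lemma \ref{lem2} gives that $(\bfT,\bfX\circ\bfT)$ is a compound Poisson Lévy process with triplet $(\bfm^*,0,\XXX^*)$ for some finite $\XXX^*$ concentrated on $[0,\infty)^n_*\times\RR^n$.

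Next I would identify $\XXX^*$ via the first jump. Let $\tau_1:=\inf\{t>0:\Delta\bfT(t)\neq\bfnull\}$. By the compound Poisson structure of $\bfT$, $\tau_1\sim\operatorname{Exp}(\lambda)$ and $\bftau_1:=\Delta\bfT(\tau_1)\sim\TTT/\lambda$ with $\tau_1,\bftau_1$ independent; furthermore $\bfX$ is independent of $(\tau_1,\bftau_1)$ by independence of $\bfT$ and $\bfX$. Since $\bfT(\tau_1-)=\bfnull$, the first jump of $(\bfT,\bfX\circ\bfT)$ is
\begin{align*}
    (\Delta\bfT(\tau_1),\Delta(\bfX\circ\bfT)(\tau_1))=(\bftau_1,\bfX(\bftau_1)),
\end{align*}
and conditioning on $\bftau_1=\bft$, $\bfX(\bftau_1)$ has law $\PP(\bfX(\bft)\in\cdot)$. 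Thus the first jump has distribution $\lambda^{-1}\int_{[0,\infty)^n_*}\PP(\bfX(\bft)\in\cdot)\,\TTT(\rmd\bft)$, and since for a compound Poisson Lévy process all jumps are i.i.d.\ from the normalised Lévy measure, we obtain
\begin{align*}
    \XXX^*(\rmd\bft,\rmd\bfx)=\eins_{[0,\infty)^n_*\times\RR^n}(\bft,\bfx)\,\PP(\bfX(\bft)\in\rmd\bfx)\,\TTT(\rmd\bft),
\end{align*}
which is exactly the Lévy measure $\ZZZ$ of $(\bfT,\bfX\odot\bfT)$. The drifts $\bfm^*=\int_{\DD_*}(\bft,\bfx)\,\XXX^*(\rmd\bft,\rmd\bfx)$ then coincide by \eqref{chartri1}, so the characteristic triplets are identical and $(\bfT,\bfX\circ\bfT)\eqd(\bfT,\bfX\odot\bfT)$.

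The main obstacle is reconciling the simple i.i.d.\ jump structure forced by the Lévy-process hypothesis with the a priori complicated dependence between successive jumps of $\bfX\circ\bfT$. For $k\geq 2$, the increment $\Delta(\bfX\circ\bfT)(\tau_k)=(X_j(T_j(\tau_k))-X_j(T_j(\tau_k-)))_{j=1}^n$ involves $\bfX$ at random, non-aligned times across components, and its conditional law given $\bfT$ need not equal $\PP(\bfX(\Delta\bfT(\tau_k))\in\cdot)$ since distinct coordinates of $\bfX$ may be correlated over overlapping time windows. The Lévy-process hypothesis collapses this dependence into i.i.d.\ jumps, letting us bypass any analysis beyond $\tau_1$, where $\bfT(\tau_1-)=\bfnull$ removes the history dependence entirely.
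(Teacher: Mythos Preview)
Your argument is correct and considerably more direct than the paper's. The paper also begins by invoking Lemma~\ref{lem2} to identify $(\bfT,\bfX\circ\bfT)$ as compound Poisson, but then proceeds via the full PRM machinery: it conditions on the entire sequence $\bfS^*=(S_i)_{i\in\NN}$ of jump times, computes the conditional characteristic function of each increment $(\Delta\bfT(S_i),\bfY_i^*)$ using the stationary-increment hypothesis, matches it with that of $(\Delta\bfT(S_i),\bfX(\Delta\bfT(S_i)))$, and finally verifies that the Laplace functional of $\ZZ_\circ$ equals that of $\ZZ_\odot$ from Lemma~\ref{prmws}. Your approach sidesteps all of this by exploiting the fact that for a compound Poisson L\'evy process the law of a single jump determines the L\'evy measure, and that the first jump is the only one whose explicit form is uncomplicated (precisely because $\bfT(\tau_1-)=\bfnull$ eliminates the overlapping-time-window issue you identify in your last paragraph). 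One small point worth making explicit: to pass from the first-jump distribution $\lambda^{-1}\ZZZ$ to $\XXX^*=\ZZZ$ you need $\XXX^*(\RR^{2n}_*)=\lambda$, which follows because the first jump time of the compound Poisson process $(\bfT,\bfX\circ\bfT)$ is $\tau_1\sim\operatorname{Exp}(\lambda)$. The paper's heavier route has the advantage that its conditioning-on-jump-times template is closer to something one might hope to extend to infinite activity (cf.\ the remark following the theorem), whereas your first-jump trick is intrinsically a finite-activity argument; but for the theorem as stated, your proof is cleaner.
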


\begin{proof}

    Let $(t,\Delta \bfT (t),\Delta(\bfX\circ \bfT)(t))_{t>0,\Delta (\bfT,\bfX\circ \bfT) (t)\neq\bfnull} =: (\bfT^*_i,\bfY^*_i)_{i\in\NN}$ with $\bfT_i^*=(S_i,\allowbreak\Delta \bfT(S_i))$ and $\bfS^*:=(S_i)_{i\in\NN}$. The PRM of $(\bfT,\bfX\circ\bfT)$ is
    \begin{align*}
        \ZZ_{\circ}(\rmd \bft^*, \rmd\bfy)=\sum_{i=1}^\infty \bfdelta_{(\bfT^*_i,\bfY^*_i)}(\rmd\bft^*, \rmd\bfy).
    \end{align*}
    
    Since $\TTT([0,\infty)^n_*)<\infty$ by assumption, the jumps of $\bfT$ are countable in increasing order by Lemma \ref{countjumplem} (i). Therefore, the sample paths $t\mapsto(\bfT,\bfX\circ\bfT)(t)$ are piecewise constant a.s., which implies by Lemma \ref{lem2} (ii), that the L\'evy process $(\bfT,\allowbreak\bfX\circ\bfT)\sim L^{2n}(\bfm,\Theta,\ZZZ)$ must have $\bfm = \int_{\DD_*} (\bft,\bfx)\,\ZZZ(\rmd\bft,\rmd\bfx)$ and $\Theta = 0$.
    These are the same $\bfm$ and $\Theta$ as $(\bfT,\bfX\odot\bfT)$ in \eqref{chartri1}--\eqref{chartri2} provided that $(\bfT,\bfX\circ\bfT)$ and  $(\bfT,\bfX\odot\bfT)$ have the same L\'evy measure, which we now verify by showing they have the same PRM.
    
    Let $S_0:=0$. Recalling that $(\bfT,\bfX\circ\bfT)$ has c\`adl\`ag and piecewise constant sample paths a.s., and that $(S_i)_{i\in\NN}$ is countable in increasing order, we have
    \begin{align}
        (\Delta \bfT(S_i),\bfY_i^*)&=(\bfT(S_i)-\bfT(S_i-), \bfX\circ \bfT(S_i)-\bfX\circ \bfT(S_i-))\nonumber\\
        &=(\bfT,\bfX\circ \bfT)(S_i)-(\bfT,\bfX\circ \bfT)(S_{i-1}),\quad i\in\NN \label{eq:incr}.
    \end{align}

    
    %
    
    For $i\in\NN$, let $\PP((\Delta \bfT(S_i),\bfY_i^*)\in(\rmd\bft,\rmd\bfy) \given \bfS^*)$ denote the conditional distribution of $(\Delta \bfT(S_i),\bfY_i^*)$ given $\bfS^*$. Since $(\Delta \bfT(S_i),\bfY_i^*)$ takes values on the Borel space $([0,\infty)^n_*\times\RR^n,\BBB([0,\infty)^n_*\times\RR^n))$, there exists a regular version of the conditional probability (see \cite[Theorem 5.3]{Kal97}), so we can assume that $\PP((\Delta \bfT(S_i),\bfY_i^*)\in(\rmd\bft,\rmd\bfy) \given \bfS^*)$ is a probability kernel. 
    
    Consequently, $\PP((\Delta \bfT(S_i),\bfY_i^*)\in(\rmd\bft,\rmd\bfy) \given \bfS^*)$ is a probability measure for each value of $\bfS^*$, so it is determined by its characteristic function, which by the disintegration theorem (see \cite[Theorem 5.4]{Kal97}) is
    \begin{align}
        \hspace{2em}&\hspace{-2em} \EE\left[ \exp(\rmi\skal{ (\Delta \bfT(S_i),\bfY_i^*)  }{\bftheta}) \givenm \bfS^*\right] \nonumber\\
        ={} & \EE\left[ \exp(\rmi\skal{ (\bfT,\bfX\circ \bfT)(S_i-S_{i-1})}{\bftheta})\givenm \bfS^*\right] \nonumber\\
        ={} & \EE\left[  \exp(\rmi\skal{ \bfT(S_i-S_{i-1})}{\bftheta_1})   \exp(\bfT(S_i-S_{i-1}) \tr \Psi_{\bfX}(\bftheta_2) )   \givenm \bfS^*\right]\nonumber \\
        ={} &  \EE\left[  \exp(\rmi\skal{ \Delta \bfT(S_i) }{\bftheta_1})   \exp(\Delta \bfT(S_i) \tr \Psi_{\bfX}(\bftheta_2) )   \givenm \bfS^*\right],\label{cfxjumpt}
    \end{align}
    using the stationary increment property of $(\bfT,\bfX\circ\bfT)$, \eqref{xvecteqn}, and the stationary increment property of $\bfT$, where $\bftheta=(\bftheta_1,\bftheta_2)$, $\bftheta_1, \bftheta_2\in\RR^n$. Similarly, the conditional distribution $\PP((\Delta \bfT(S_i), \bfX(\Delta \bfT(S_i)))\in(\rmd\bft,\rmd\bfy) \given \bfS^*)$ is also a probability kernel, and using the tower law, its characteristic function is
    \begin{align*}
        \hspace{2em}&\hspace{-2em} \EE\left[ \exp(\rmi\skal{ (\Delta \bfT(S_i),\bfX(\Delta \bfT(S_i)))  }{\bftheta}) \givenm \bfS^*\right] \\
        ={} & \EE\left[  \EE\left[ \exp(\rmi\skal{ (\Delta \bfT(S_i),\bfX(\Delta \bfT(S_i)))  }{\bftheta}) \givenm \Delta \bfT(S_i), \bfS^*\right]\given \bfS^*\right], 
    \end{align*}
    which matches \eqref{cfxjumpt}.  Thus, we have
    %
    \begin{align}
        \hspace{2em}&\hspace{-2em}\PP( (\Delta \bfT(S_i),\bfY_i^*)\in (\rmd \bft, \rmd \bfy) \given  \bfS^*) \nonumber\\
        ={} & \PP((\Delta \bfT(S_i),\bfX(\Delta \bfT(S_i))) \in (\rmd \bft, \rmd \bfy)\given \bfS^*)\nonumber\\
        ={} & \PP((\Delta \bfT(S_i),\bfX(\Delta \bfT(S_i))) \in (\rmd \bft, \rmd \bfy))\nonumber\\
        ={} & \PP(\bfX(\Delta \bfT(S_i)) \in\rmd \bfy\given \Delta \bfT(S_i) = \bft)\PP(\Delta \bfT(S_i) \in \rmd \bft)\nonumber\\
        ={} & \PP(\bfX(\bft)\in\rmd \bfy)\PP(\Delta \bfT(S_i) \in \rmd \bft) \nonumber\\
        ={} & Q((t,\bft), \rmd \bfy)\PP(\Delta \bfT(S_i) \in \rmd \bft),\quad i\in\NN,\, t>0, \label{condprobmeas}
    \end{align}
    %
    where the third line is obtained by noting that the conditioning on $\bfS^*$ can be dropped because, by applying Lemma \ref{lem2} (iii) to $(\bfT,\bfX\circ \bfT)$, we see that $\bfS^*$ are the times of the jumps of a compound Poisson process, which are  independent of the size of the jumps with distribution $(\Delta \bfT(S_i),\bfX(\Delta \bfT(S_i)))$, the fourth line is obtained by \cite[Chapter 5, Equation (7)]{Kal97}, and the fifth line is obtained by the independence of $\bfT$ and $\bfX$. 
    
    %

    By definition, $\bfS^*$ are the jumping times of $(\bfT,\bfX\circ \bfT)$. Since $\bfT$ is a pure-jump subordinator with finite activity, by examining the sample paths, $\bfX\circ\bfT$ cannot jump unless $\bfT$ does. So almost surely, $ \{t>0: \Delta(\bfX\circ\bfT)(t)\neq \bfnull \}\subseteq \{t>0: \Delta\bfT(t)\neq \bfnull \}$, which implies $\bfS^*=\{t>0: \Delta\bfT(t)\neq \bfnull \}$. Thus, $\bfS^*$ are also the jumping times of $\bfT$.

    Next, for any nonnegative, measurable real function $f$, we have
    \begin{align}
        \EE\left[ e^{-f(S_i,\Delta \bfT(S_i),\bfY^*_i)} \givenm \bfS^* \right] ={} &  \int_{[0,\infty)^n_*\times\RR^n} e^{-f(S_i,\bft,\bfy)} \,Q((t,\bft),\rmd \bfy) \PP(\Delta \bfT(S_i) \in \rmd \bft)\nonumber\\
        ={} & \int_{[0,\infty)^n_*} e^{-g(S_i,\bft)}\,\PP(\Delta \bfT(S_i) \in \rmd \bft)\nonumber\\
        ={} & \int_{[0,\infty)^n_*} e^{-g(S_i,\bft)}\,\PP(\Delta \bfT(S_i) \in \rmd \bft\given \bfS^*)\nonumber\\
        ={} &\EE \left[e^{-g(S_i,\Delta\bfT(S_i))}\givenm \bfS^* \right], \quad i\in\NN,\, t>0, \label{gcondons}
    \end{align}
    where the first line follows from  the disintegration theorem (see \cite[Theorem 5.4]{Kal97}) and using \eqref{condprobmeas}, the third line follows from a similar argument as above, $\bfT$ being a compound Poisson process means the time and size of the jumps are independent, and the final line follows from another application of the disintegration theorem.
    
    Putting this together, we  compute the Laplace functional of $\ZZ_{\circ}$,
    \begin{align*}
        \EE\left[ \exp\left(-\int_{E} f(\bft^*,\bfy)\,\ZZ_{\circ}(\rmd \bft^*, \rmd \bfy) \right)\right] ={}& \EE \left[ \EE\left[\prod_{i=1}^\infty e^{-f(S_i,\Delta \bfT(S_i),\bfY^*_i)} \givenm \bfS^* \right]\right] \\
        ={} & \EE \left[\prod_{i=1}^\infty \EE\left[ e^{-f(S_i,\Delta \bfT(S_i),\bfY^*_i)} \givenm \bfS^* \right]\right]\\
        ={} & \EE \left[\prod_{i=1}^\infty \EE\left[e^{-g(S_i,\Delta\bfT(S_i))}  \givenm \bfS^* \right]\right]\\
        ={} & \EE \left[ \EE\left[\prod_{i=1}^\infty e^{-g(S_i,\Delta   \bfT(S_i))}  \givenm \bfS^* \right]\right]\\
        ={} & \EE\left[\prod_{i=1}^\infty e^{-g(\bfT_i^*)}  \right]
    \end{align*}
    where the second line follows since  $(\Delta \bfT(S_i),\bfY_i^*)$, $i\in\NN$, are conditionally independent given $\bfS^*$ because of \eqref{eq:incr} and the independent increment property of the L\'evy process $(\bfT,\bfX\circ \bfT)$, the third line follows from \eqref{gcondons}, and the fourth line  follows from a similar argument as the second line but applied to $\bfT$.  Thus, we have proven  $\ZZ_{\circ}=\ZZ_{\odot}$ from \eqref{eq1}, and hence  $(\bfT,\bfX\circ\bfT)$ and $(\bfT,\bfX\odot\bfT)$ have the same L\'evy measure, and hence the same characteristic triplet.
\end{proof}
%
%

%
%

\begin{remark}
    It is difficult to extend Theorem \ref{orderjumpcaseprop} to the more general case of a finite activity subordinator with nonzero drift or to all pure-jump subordinators. In the former case, the proof's reliance on the properties of the compound Poisson process would fail. In the latter case, for any pure-jump subordinator $\bfT$, we can create a finite activity subordinator $\bfT^{(k)}$ by truncating the size of the jumps to $\{\|\bft\|\in(1/k,\infty)\}$, $k>0$, but it is not clear the strongly subordinated process $(\bfT^{(k)},\bfX\circ\bfT^{(k)})$ is a L\'evy process to which Theorem \ref{orderjumpcaseprop} can be applied.
\end{remark}

\begin{remark}
    It would be ideal if the assumption $(\bfT,\bfX\circ \bfT)\sim L^{2n}$ in Theorem~\ref{orderjumpcaseprop} could be replaced with the weaker assumption $\bfX\circ \bfT\sim L^{n}$.  In the proof of Proposition~\ref{detsubordcase}, it is shown that $\bfX\circ \bfT\sim L^{n}$ implies $(\bfT,\bfX\circ \bfT)\sim L^{2n}$ under the assumption that $\bfT$ is deterministic. We conjecture this result  holds in general, although it is not clear how this can be proven.  In Theorem~\ref{orderjumpcaseprop}, the assumption $(\bfT,\bfX\circ \bfT)\sim L^{2n}$ on the joint process is crucial to the proof.
\end{remark}


\section{Discussion} \label{sect4}
Let  $\bfT\sim S^n$ and  $\bfX\sim L^n$ be independent with $n\geq 2$. There are some simply stated but open questions on subordination of L\'evy processes: 
\begin{itemize}
    \item If $\bfX \circ \bfT$ is a L\'evy process, then necessarily $\bfX\circ \bfT\eqd \bfX\odot \bfT$?
    \item What are the necessary and sufficient conditions on $\bfT$ and $\bfX$ such that $\bfX\circ\bfT$ is a L\'evy process?
    \item If $\bfX\circ\bfT$ is not a L\'evy process, what are the necessary and sufficient conditions on $\bfT$ and $\bfX$ such that it can be \emph{mimicked} by some L\'evy process $\bfY$ in the sense that  $(\bfX\circ\bfT)(t) \eqd\bfY(t) $ for all $t\geq 0$?
\end{itemize}

On the first question, if the answer is yes, then there cannot exist a different way to define the law of weak subordination for the class of subordinators $\bfT\sim S^n$ and  subordinates $\bfX\sim L^n$ such that $\bfX\circ \bfT\sim L^n$. Otherwise, if the answer is no, it would be interesting to determine the characteristics of $\bfX\circ \bfT$. Here, we have shown that the answer is yes if $\bfT$ is a deterministic subordinator, and it also holds under the stronger assumption $(\bfT,\allowbreak\bfX \circ \bfT)\sim L^{2n}$ if $\bfT$ is a pure-jump subordinator with finite activity.

On the second question, the sufficient conditions \ref{case1}--\ref{case3} are well-known. A partial converse has been given in \cite[Proposition 3.9]{BLM17a}. However, there are no known examples outside of condition \ref{case3}, where $\bfX\circ \bfT\sim L^n$ (besides the trivial cases where some components of $\bfT$ are the zero process). If necessary and sufficient conditions were known, it may indeed turn out that there are no additional L\'evy processes to which Proposition~\ref{detsubordcase} and Theorem~\ref{orderjumpcaseprop} are applicable besides those satisfying condition \ref{case3}.

We do not deal with the third question here, however, \cite[Proposition 3.4]{BLM17a} shows that a sufficient condition for  $(\bfX\circ\bfT)(t) \eqd(\bfX\odot\bfT)(t)$, for all $t\geq0$, to hold is that $\bfT=(T_1,\dots, T_n)$ has monotonic components, meaning there exists a permutation $\langle{(1),\dots,(n)}\rangle$ such that $T_{(1)}\leq \dots \leq T_{(n)}$. A partial converse is given in \cite[Proposition 3.10]{BLM17a}, which suggests that outside of the monotonic assumption, there may be no L\'evy process mimicking $\bfX\circ \bfT$. This raises the conjecture that if $\bfX\circ \bfT$ is not itself a L\'evy process, then it can be mimicked by a L\'evy process if and only if $\bfT$ has monotonic components.  Furthermore, these results also suggest that the mimicking L\'evy process, if it exists, may be $\bfX\odot\bfT$.

\subsection*{Acknowledgments}

This research was partially supported by ARC grant DP160104737. We thank the anonymous referees for their helpful comments.

\bibliographystyle{abbrv}
\bibliography{bibliography}
\addcontentsline{toc}{section}{\refname}

\end{document}